\DeclarePairedDelimiter{\ceil}{\lceil}{\rceil}
\newtheorem{theorem}{Theorem}[section]
\newtheorem{lemma}[theorem]{Lemma}
\newtheorem{corollary}[theorem]{Corollary}
\newtheorem{proposition}[theorem]{Proposition}
\newtheorem{remark0}[theorem]{Remark}
\newtheorem{example0}[theorem]{Example}
\newtheorem{definition}[theorem]{Definition}
\newenvironment{example}{\begin{example0}\rm}{\end{example0}}
\newenvironment{remark}{\begin{remark0}\rm}{\end{remark0}}
\newcommand{\propref}[1]{Proposition~\ref{#1}}
\newcommand{\thmref}[1]{Theorem~\ref{#1}}
\newcommand{\lemref}[1]{Lemma~\ref{#1}}
\newcommand{\exref}[1]{Example~\ref{#1}}
\newcommand{\remref}[1]{Remark~\ref{#1}}
\def\res{{\mathbf{k}}}
\def\HF{{\operatorname{H\!F}}}
\def\HP{\operatorname{H\!P}}
\def\deg{\operatorname{deg}}
\def\Max{\operatorname{Max}}
\def\dim{\operatorname{dim}}
\def\img{\operatorname{Im}}
\def\ker{\operatorname{Ker}}
\begin{document}
\title[Sumsets and monomial projective curves]{{\bf
Sumsets and projective curves}}
\author[J. Elias]{J. Elias ${}^{*}$}
\thanks{${}^{*}$
Partially supported by PID2019-104844GB-I00\\
\rm \indent 2020 MSC:
Primary 13D40;
; Secondary 11B13; 14H45}
\address{Joan Elias
\newline \indent Departament de Matem\`{a}tiques i Inform\`{a}tica
\newline \indent Universitat de Barcelona (UB)
\newline \indent Gran Via 585, 08007
Barcelona, Spain}  \email{{\tt elias@ub.edu}}

\begin{abstract}
The aim of this note is to  exploit a new relationship  between  additive combinatorics and the geometry of
monomial projective curves.
We associate to a finite set of non-negative integers $A=\{a_1,\cdots, a_n\}$  a monomial projective curve $C_A\subset \mathbb P^{n-1}_{\res}$ such that the Hilbert function of $C_A$ and
the cardinalities of $sA:=\{a_{i_1}+\cdots+a_{i_s}\mid 1\le i_1\le \cdots \le i_s\le n\}$ agree.
The singularities of $C_A$ determines the asymptotic behaviour of $|sA|$, equivalently the Hilbert polynomial of $C_A$, and the asymptotic structure of $sA$.
We show that some additive inverse problems can be translate to the rigidity of Hilbert polynomials and we improve an upper bound of the Castelnuovo-Mumford regularity of monomial projective curves by using results of additive combinatorics.
\end{abstract}

\maketitle

\section{Introduction }

Let $A=\{a_1,\cdots, a_n\}$, $n\ge 2$, be a set of different non-negative integers; we assume that
$a_1<\cdots <a_n$.
Given a non-negative integer $s\ge 1$ the $s$-fold iterated sumset of $A$ is
$$
sA=\{a_{i_1}+\cdots+a_{i_s}\mid 1\le i_1\le \cdots \le i_s\le n\},
$$
we set  $0A=\{0\}$; notice that $1 A=A$.

Following Nathanson, a direct problem in  additive combinatorics is a problem in which we try to determine the structure and properties of $|sA|$, $s\ge 0$, when the set $A$ is known.
On the other hand, an inverse problem in additive combinatorics   is a problem in which we attempt to deduce properties of $A$ from properties of $sA$, $s\ge 0$, \cite{Nat96}.

The aim of this paper is to establish and to study a bridge between  additive combinatorics and the geometry of
monomial projective curves.
We argue back and forth: we use  results of monomial projective curves to recover or to improve  results of additive combinatorics and vice versa,
see \thmref{lowerbound2} and \thmref{bermejo}.
In particular, we show that some inverse problems can be translate in terms of the rigidity of  Hilbert polynomials, see Section 4 and \cite{Eli90}.

In this paper, we have selected some significative results of  the geometry of monomial projective curves and additive combinatorics; there are a huge number of results and properties of both areas to link that we
will consider elsewhere, see \cite{CEM21}.

\medskip
The contents of the paper is the following.
In the second section, following \cite{EM20}, we attach to the set $A$ a monomial projective curve $C_A\subset \mathbb P_{\res}^{n-1}$. The  Hilbert function of $C_A$ and
the cardinalities of $sA$, $s\ge 0$, agree.
Some previous results can be found in \cite{Kho95}.

In the section three we use the data provided by the singularities of $C_A$ to determine  the asymptotic behaviour of $|sA|$, equivalently the Hilbert polynomial of $C_A$, \propref{HilbertPol}.
As  a consequence we can describe the asymptotic decomposition of $sA$ of the so-called fundamental result of additive combinatorics,  \propref{Funda} and \propref{refinFun}.

The section 4 is devoted to recover, by considering generic hyperplane sections of $C_A$, some additive inverse results and to link them with rigid polynomials and rigid properties, \propref{lowerbound}, \thmref{lowerbound2}.
We finish the paper improving an upper bound on the Castelnuovo-Mumford regularity of $C_A$ established in \cite{BGG17} by using a result of L.V. Lev on the growth of $|sA|$, \thmref{bermejo}.

For the basic results on algebra, algebraic geometry or additive number theory we will   use:
 \cite{BH97},  \cite{Har97},  \cite{Nat96}.
The computations of this paper are performed by using Singular, \cite{DGPS}.

\medskip
\begin{center}
  {\sc Notations}
\end{center}

In this paper $\res$ is an arbitrary infinite field.
  Let $R=\sum_{i\ge 0} R_i$ be an standard $\res=R_0$ algebra, i.e. $R=\res[R_1]$.
  We denote by $\HF_R$ the Hilbert function of $R$, i.e. $\HF_R(i)=\dim_{\res} R_i$ for all $i\ge 0$.
  It is known that there exists a rational coefficient  polynomial $\HP_R$, Hilbert polynomial of $R$,
  such that $\HP(i)=\HF(i)$ for $i\gg 0$.

Given a set $B$ of non-negative integers $b_1,\dots ,b_n$ we denote by
$\langle b_1,\cdots ,b_n\rangle$ the sub-semigroup of $\mathbb N$ generated by $B$.
Given a multi-index $\alpha=(\alpha_1,\cdots,\alpha_n) \in \mathbb N^n$ we define its total order by
$|\alpha|=\sum_{i=1}^n \alpha_i$ and the total order with respect to $A$ by
$|\alpha|_A=\sum_{i=1}^n a_i \alpha_i$.

\medskip
\section{The bridge between additive number theory and projective curves}

We first show that we can consider several straight simplifications on the set $A$ and an easy property on the growth of $|sA|$, see \cite{Nat96},

\begin{lemma}
\label{1Reduction}
Given a set of non-negative integers $A=\{a_1,\cdots, a_n\}$, $n\ge 2$, with $a_1<\cdots <a_n$, it holds:
\begin{enumerate}
  \item In order to compute $|sA|$ we may assume that $a_1=0$ and $\gcd(a_2,\cdots, a_n)=1$,
   \item under the above conditions,  $|(s+1)A|\ge |sA|+n-1$ for all $s\ge 0$.
\end{enumerate}
\end{lemma}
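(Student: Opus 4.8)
The plan is to treat the two statements independently, since the first is a normalization while the second is a genuine growth estimate. For part (1), I would observe that the two normalizing operations --- translation and dilation --- both induce cardinality-preserving bijections on every sumset, so they leave each $|sA|$ unchanged. Concretely, replacing $A$ by $A'=\{0,a_2-a_1,\ldots,a_n-a_1\}$ sends each element $a_{i_1}+\cdots+a_{i_s}$ of $sA$ to $(a_{i_1}+\cdots+a_{i_s})-s a_1$, so $sA'=sA-s a_1$ and $|sA'|=|sA|$; now the smallest element is $0$. Next, letting $d$ be the gcd of the positive elements of $A'$, the dilation $x\mapsto x/d$ carries $A'$ to a set $A''$ of non-negative integers with $sA''=\tfrac{1}{d}\,sA'$, again a bijection on each sumset, and now the positive elements of $A''$ are coprime. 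Hence it is harmless to assume $a_1=0$ and $\gcd(a_2,\ldots,a_n)=1$.

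For part (2), I would first record the inclusion $sA\subseteq (s+1)A$: because $a_1=0$, any sum $a_{i_1}+\cdots+a_{i_s}$ extends to one of length $s+1$ by inserting an extra index equal to $1$, which preserves the non-decreasing order and leaves the value unchanged. I would then pin down the largest element of $sA$, namely $s a_n$, attained by taking all indices equal to $n$; in particular every element of $sA$ lies in the range $[0,s a_n]$. The key step is to exhibit $n-1$ elements of $(s+1)A$ lying strictly above $s a_n$, namely the numbers $s a_n+a_i$ for $i=2,\ldots,n$. Each lies in $(s+1)A$ (adjoin one copy of $a_i$ to $s$ copies of $a_n$), they are pairwise distinct since $a_2<\cdots<a_n$, and each exceeds $s a_n$ because $a_i>0$ for $i\ge 2$. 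Thus these $n-1$ numbers are disjoint from $sA$, and adjoining them to the inclusion above yields
$$ |(s+1)A|\ge |sA|+(n-1). $$

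The argument for (2) is essentially combinatorial, and the only point requiring care is that the $n-1$ candidates $s a_n+a_2,\ldots,s a_n+a_n$ are simultaneously pairwise distinct and absent from $sA$; both facts follow directly from the strict chain $0=a_1<a_2<\cdots<a_n$. I expect the sole (minor) subtlety to be the choice of where to locate the new elements: the hypothesis $a_1=0$ is precisely what yields the nesting $sA\subseteq (s+1)A$ and confines $sA$ to $[0,s a_n]$, so placing the witnesses just above $s a_n$ is what guarantees they are new. The coprimality hypothesis is not actually used in (2); it is merely carried along from the normalization in part (1).
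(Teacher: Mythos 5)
Your proof is correct and follows essentially the same route as the paper: part (1) by the cardinality-preserving affine normalization (the paper combines translation and division by $d$ into the single set $A'=\{0,(a_2-a_1)/d,\dots,(a_n-a_1)/d\}$, which you split into two steps), and part (2) by noting that $s a_n$ is the maximum of $sA$ and that the $n-1$ elements $s a_n+a_2,\dots,s a_n+a_n$ of $(s+1)A$ all exceed it. Your write-up is slightly more explicit than the paper's in spelling out the inclusion $sA\subseteq (s+1)A$ coming from $a_1=0$, but the argument is the same.
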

\begin{proof}
\noindent
$(1)$ Let us consider $A'=\{0, (a_2-a_1)/d,\cdots, (a_n-a_1)/d\}$ where $d=\gcd(a_2-a_1,\cdots, a_n-a_1)$.
It is easy to see that $|sA| =|sA'|$ for all $s\ge 0$.

\noindent
$(2)$
Assume that $A$ satisfies the conditions of $(1)$.
Since the maximum of $sA$ is $sa_n$ we deduce that $s a_n+a_2,\cdots, sa_n+a_n\in (s+1)A\setminus sA$, so we get the claim:
$|(s+1)A|\ge |sA|+n-1$ for all $s\ge 0$.
\end{proof}

Given a general set of non-negative integers $A$, the associated set $A'$ of the proof of the previous Lemma, is called the normal form of $A$, see \cite {Nat96}.
From now on we assume that a set $A$ satisfies \lemref{1Reduction} $(1)$.

Next, we recall the key construction of \cite{EM20}.

\begin{definition}
  We denote by $R(A)$ the $\res$-subalgebra of $\res[t,w]$ generated by $t^{a_i} w$, $i=1,\dots, n$.
  We consider $\res[t,w]$ endowed with the grading defined by $\deg(t)=0$, $\deg(w)=1$.
\end{definition}

Let $\phi=\res[X_1,\cdots, X_n]\longrightarrow \res[t,w]$ the degree zero $\res$-algebra morphism defined by
$\phi(X_i)=t^{a_i} w$.
We have $\img(\phi)=R(A)$ and the homogeneous piece of degree $s$ of $R(A)$, i.e. $R(A)_s$, admits
the $\res$-basis
\begin{equation}
\label{basis}
  t^{\alpha}w^s, \quad\alpha\in sA.
\end{equation}

From this fact we get:

\begin{proposition}\cite[Section 2]{EM20}
For all $s\ge 0$ it holds $\HF_{R(A)}(s)=|sA|$.
\end{proposition}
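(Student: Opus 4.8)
The plan is to read the result off directly from the explicit description of the graded pieces of $R(A)$ recorded in \eqref{basis}, so that the entire content becomes a dimension count. By definition $\HF_{R(A)}(s)=\dim_\res R(A)_s$, hence it suffices to show that the $\res$-vector space $R(A)_s$ has dimension exactly $|sA|$. First I would pin down the degree-$s$ component. Since $R(A)=\img(\phi)$ is generated as a $\res$-algebra by the elements $\phi(X_i)=t^{a_i}w$, each homogeneous of $w$-degree $1$, a monomial $X^\alpha=X_1^{\alpha_1}\cdots X_n^{\alpha_n}$ maps to
$$
\phi(X^\alpha)=\prod_{i=1}^n (t^{a_i}w)^{\alpha_i}=t^{|\alpha|_A}\,w^{|\alpha|},
$$
which is homogeneous of $w$-degree $|\alpha|=\sum_i\alpha_i$. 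Because $\phi$ is a graded morphism for the chosen grading, the piece $R(A)_s$ is spanned over $\res$ by the images $\phi(X^\alpha)$ with $|\alpha|=s$, equivalently by the monomials $t^{|\alpha|_A}w^s$ as $\alpha$ ranges over $\{\alpha\in\mathbb N^n : |\alpha|=s\}$.

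It then remains to count these spanning monomials up to repetition. The exponent of $t$ in $\phi(X^\alpha)$ is $|\alpha|_A=\sum_i a_i\alpha_i$, and as $\alpha$ runs over the compositions of $s$ this exponent runs over precisely $sA$, writing $\alpha_i$ for the multiplicity of the index $i$ in a sum $a_{i_1}+\cdots+a_{i_s}$. Two such monomials coincide exactly when their $t$-exponents agree, and distinct monomials of $\res[t,w]$ are $\res$-linearly independent. Hence the distinct monomials $\{t^\beta w^s : \beta\in sA\}$ form a $\res$-basis of $R(A)_s$, giving $\dim_\res R(A)_s=|sA|$ and the claimed equality.

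The only point that needs care, and the only place where the construction does any work, is the passage from the spanning set indexed by compositions $\alpha$ to the basis indexed by the value set $sA$: several compositions $\alpha$ with $|\alpha|=s$ may yield the same element $|\alpha|_A$ of $sA$, so the dimension is $|sA|$ rather than the (larger) number of such compositions. Invoking linear independence of distinct monomials of $\res[t,w]$ is exactly what guarantees that no further collapsing occurs and that the count is sharp; this is the heart of the statement, everything else being bookkeeping built into \eqref{basis}.
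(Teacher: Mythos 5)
Your proof is correct and follows essentially the same route as the paper, which simply reads the dimension count off from the monomial basis $t^{\alpha}w^s$, $\alpha\in sA$, of $R(A)_s$ recorded in \eqref{basis}. You merely make explicit the justification of that basis (spanning by images $\phi(X^\alpha)$ with $|\alpha|=s$, plus linear independence of distinct monomials in $\res[t,w]$), which the paper leaves implicit.
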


In the following result a  system of generators of $\ker(\phi)$ is computed:

\begin{proposition}\cite[ Proposizione 2.2]{CN84}, \cite[Proposition 6.4]{EM20}
  \label{EM-R(A)}
  The kernel of $\phi$ is generated by the binomials $X^\alpha-X^\beta$, $\alpha, \beta \in \mathbb N^n$, such that   $|\alpha|=|\beta|$ and $|\alpha|_A=|\beta|_A$.
\end{proposition}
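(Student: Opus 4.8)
The plan is to reduce the statement to the standard fact that the defining ideal of a monomial parametrization is a binomial ideal, exploiting the fact that $\phi$ is homogeneous for two gradings at once. First I would record the basic computation
\[
\phi(X^\alpha)=\prod_{i=1}^n (t^{a_i}w)^{\alpha_i}=t^{|\alpha|_A}\,w^{|\alpha|},
\]
which makes the ``easy'' inclusion transparent: if $|\alpha|=|\beta|$ and $|\alpha|_A=|\beta|_A$, then $\phi(X^\alpha)=\phi(X^\beta)$, so every binomial of the stated type lies in $\ker(\phi)$. The same formula shows that $\phi$ respects two gradings on $\res[X_1,\dots,X_n]$ simultaneously: the standard grading $\deg(X_i)=1$, which $\phi$ carries to the $w$-degree, and the weight grading $\deg(X_i)=a_i$, which $\phi$ carries to the $t$-degree. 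Hence $\ker(\phi)$ is homogeneous with respect to the bigrading $(|\alpha|,|\alpha|_A)$.

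For the reverse inclusion I would argue as follows. Since $\ker(\phi)$ is bihomogeneous, it suffices to express every bihomogeneous element of $\ker(\phi)$ as a $\res$-linear combination of binomials of the desired form. Let $f=\sum_{j} c_j X^{\gamma_j}\in\ker(\phi)$ be bihomogeneous with distinct monomials $X^{\gamma_j}$; then all the $\gamma_j$ share the same pair $(|\gamma_j|,|\gamma_j|_A)=(d,m)$, so by the displayed formula $\phi(X^{\gamma_j})=t^m w^d$ for every $j$. Because the monomials $t^m w^d$ are $\res$-linearly independent in $\res[t,w]$, the condition $\phi(f)=0$ forces $\sum_j c_j=0$. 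Fixing one index, say $j=1$, I can then write
\[
f=\sum_{j} c_j X^{\gamma_j}=\sum_{j} c_j\bigl(X^{\gamma_j}-X^{\gamma_1}\bigr),
\]
using $\sum_j c_j=0$; each binomial $X^{\gamma_j}-X^{\gamma_1}$ satisfies $|\gamma_j|=|\gamma_1|$ and $|\gamma_j|_A=|\gamma_1|_A$, hence is of the stated type. This shows that the binomials in question span $\ker(\phi)$ as a $\res$-vector space, and in particular generate it as an ideal.

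The only genuinely delicate point is the passage to bihomogeneous elements, which rests on the linear independence of the monomials $t^m w^d$ in the target: this is exactly what guarantees that the kernel decomposes as the direct sum of its bigraded pieces and that within each piece the coefficients of a kernel element must sum to zero. Everything else is formal. An alternative to the grading argument is the classical term-order induction: pick a monomial order, observe that a nonzero element of $\ker(\phi)$ cannot be a single monomial, so its leading monomial must cancel against another monomial with the same image, producing a binomial of the stated type that reduces the number of terms, and iterate. I would prefer the bigraded argument, since it avoids choosing an order and directly yields the stronger vector-space spanning statement.
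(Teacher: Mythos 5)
Your proof is correct, and it is essentially the argument this paper itself relies on: the paper gives no proof of this proposition (it is quoted from \cite{CN84} and \cite{EM20}), but your bigraded-kernel argument --- every monomial maps to $t^{|\alpha|_A}w^{|\alpha|}$, so the coefficients of a kernel element in each bidegree sum to zero and the element telescopes into binomials of the stated type --- is exactly the technique the paper deploys for the analogous inclusion $I_A\subset \ker(\phi)$ in the proof of \propref{bridge}. No gaps: the linear independence of the monomials $t^m w^d$ indeed justifies both the reduction to bihomogeneous elements and the vanishing of the coefficient sum, so your argument stands as written.
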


Next, we link $R(A)$ with a suitable monomial projective curve.

\begin{definition}
  Given a set $A=\{a_1=0,a_2,\cdots, a_n\}$  such that $a_1<\cdots < a_n$ and  $\gcd(a_2,\cdots, a_n)=1$ we consider the monomial curve $C_A$ of $\mathbb P_{\res}^{n-1}$ defined by the Kernel of
  $$
  \begin{array}{cccc}
    \psi & \res[X_1,\cdots, X_n] & \longrightarrow & \res[u,v] \\
         & X_i & \mapsto & u^{a_n-a_i}v^{a_i}
  \end{array}
  $$
\end{definition}

If we consider the standard grading of $\res[u,v]$ we get that $\ker(\psi)=I_{A}$ is a homogeneous ideal of
$\res[X_1,\cdots, X_n]$.
We denote by $\res[C_A]:=\res[X_1,\cdots,X_n]/\ker(\phi)$ the homogeneous coordinate ring of $C_A$.
We write $\HF_{C_A}=\HF_A$ and $\HP_{C_A}=\HP_A$.

\begin{proposition}
  \label{bridge}
  For all set $A=\{a_1=0,a_2,\cdots, a_n\}$ we have that $\ker (\phi)=I_{A}$.
  Hence $R(A)\cong \res[C_A]$ as graded $k$-algebras.
\end{proposition}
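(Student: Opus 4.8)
The plan is to prove the ideal equality $\ker(\phi)=\ker(\psi)$; the graded isomorphism $R(A)\cong\res[C_A]$ then follows formally. Indeed, $\phi$ is a graded $\res$-algebra homomorphism once we give $\res[X_1,\dots,X_n]$ the standard grading $\deg(X_i)=1$, because $\phi(X_i)=t^{a_i}w$ has $w$-degree $1$; hence by the first isomorphism theorem $R(A)=\img(\phi)\cong\res[X_1,\dots,X_n]/\ker(\phi)$ as graded algebras. Since $\res[C_A]=\res[X_1,\dots,X_n]/I_A$ with $I_A=\ker(\psi)$, it suffices to identify the two kernels. I would first record that $\ker(\psi)$ is homogeneous for the same standard grading: all the $\psi(X_i)=u^{a_n-a_i}v^{a_i}$ have $(u,v)$-degree $a_n$, so $\psi$ sends $\res[X_1,\dots,X_n]_d$ into $\res[u,v]_{d\,a_n}$ and the identification is one of graded algebras.

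The heart of the argument is a computation on monomials. Writing $\alpha=(\alpha_1,\dots,\alpha_n)$, one gets $\phi(X^\alpha)=t^{|\alpha|_A}w^{|\alpha|}$ and
\[
\psi(X^\alpha)=\prod_{i=1}^n\bigl(u^{a_n-a_i}v^{a_i}\bigr)^{\alpha_i}=u^{a_n|\alpha|-|\alpha|_A}\,v^{|\alpha|_A}.
\]
Both $\ker(\phi)$ and $\ker(\psi)$ are kernels of monomial homomorphisms, and such a kernel is $\res$-spanned by the binomials $X^\alpha-X^\beta$ whose two monomials lie in the same fibre: grouping the monomials appearing in $f\in\ker(\psi)$ by their $\psi$-image and using that distinct monomials of $\res[u,v]$ are $\res$-linearly independent forces the coefficients in each group to cancel. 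For $\psi$ the fibre condition $\psi(X^\alpha)=\psi(X^\beta)$ reads $|\alpha|_A=|\beta|_A$ and $a_n|\alpha|-|\alpha|_A=a_n|\beta|-|\beta|_A$; since $a_n>a_1=0$, this pair is equivalent to $|\alpha|=|\beta|$ and $|\alpha|_A=|\beta|_A$. That is exactly the description of the binomial generators of $\ker(\phi)$ furnished by \propref{EM-R(A)}, so the two ideals have the same spanning set of binomials and $\ker(\phi)=\ker(\psi)=I_A$.

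Conceptually the coincidence is pure linear algebra, and this is the point I would emphasize as the real content rather than an obstacle: the exponent data $(a_n-a_i,a_i)$ defining $\psi$ is the image of the data $(a_i,1)$ defining $\phi$ under the integer matrix $\left(\begin{smallmatrix}-1&a_n\\[1pt]1&0\end{smallmatrix}\right)$, whose determinant is $-a_n\neq 0$. An invertible change of the target leaves the lattice of relations $\{\alpha-\beta:\phi(X^\alpha)=\phi(X^\beta)\}$ on the exponents unchanged, and the toric ideal depends only on that lattice; hence the kernels agree, and the roles of the normalizing hypotheses $a_1=0$ and $a_n\neq 0$ become transparent.

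The only step that genuinely deserves care is the reduction to binomial generators, which I would justify by the linear-independence argument above so as to avoid importing external toric-ideal machinery, together with the grading bookkeeping needed to upgrade the ring isomorphism to a graded one. Beyond this routine verification no real difficulty arises, since \propref{EM-R(A)} already supplies the matching binomial description of $\ker(\phi)$.
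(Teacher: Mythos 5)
Your proposal is correct and takes essentially the same approach as the paper's proof: the key monomial computation $\psi(X^\alpha)=u^{a_n|\alpha|-|\alpha|_A}v^{|\alpha|_A}$, the appeal to \propref{EM-R(A)}, and the coefficient-grouping/linear-independence argument are exactly the paper's ingredients, just packaged symmetrically as a single equality of binomial spanning sets rather than as two separate inclusions. The closing lattice-invariance remark is a pleasant conceptual gloss but not a genuinely different proof.
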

\begin{proof}
We first prove that $\ker (\phi)\subset I_A$.
Let's consider a  binomial  $X^\alpha-X^\beta$, $\alpha, \beta \in \mathbb N^n$, with   $|\alpha|=|\beta|$ and $|\alpha|_A=|\beta|_A$.
Then
$$
\psi(X^\alpha-X^\beta)=u^{a_n |\alpha|-|\alpha|_A}v^{|\alpha|_A}-u^{a_n |\beta|-|\beta|_A}v^{|\beta|_A}=0,
$$
by \propref{EM-R(A)} we get that $\ker(\phi)\subset \ker (\psi)=I_A$.

Next, we  prove that $I_A\subset \ker (\phi)$.
Let $F\in I_A$ be a polynomial, so
$$
F(u^{a_n}, u^{a_n-a_2}v^{a_2},\cdots, u^{a_n-a_{n-1}}v^{a_{n-1}}, v^{a_n})=0.
$$
If  $X^\alpha$, $\alpha\in \mathbb N^n$, is a monomial of $F$ then
$$
X^\alpha(u^{a_n}, u^{a_n-a_2}v^{a_2},\cdots, u^{a_n-a_{n-1}}v^{a_{n-1}}, v^{a_n})=
u^{a_n|\alpha|-|\alpha|_A}v^{|\alpha|_A}
$$
Hence we may assume that $F$ is a homogeneous polynomial
$$
F=\sum_{i=1}^d \lambda_i X^{\alpha_i}
$$
such that $|\alpha_i|_A=c$, $a_n|\alpha_i|=c+d$ and $\lambda_i\in \res\setminus \{0\}$.

Since $F(u^{a_n}, u^{a_n-a_2}v^{a_2},\cdots, u^{a_n-a_{n-1}}v^{a_{n-1}}, v^{a_n})=0$ we deduce that
$\sum_{i=1}^d \lambda_i =0,$
so
$$
F=\sum_{i=1}^{d-1} \lambda_i ( X^{\alpha_i}- X^{\alpha_d}) \in \ker (\phi).
$$
\end{proof}

\begin{remark}
\label{BB}
We write $\mathcal B_A=\frac{\res[C_A]}{X_1 \res[C_A]}$, notice that $\mathcal B_A$ is a  graded algebra of dimension one since the coset of $X_1$ is a non-zero divisor of $\res[C_A]$;
 $\mathcal B_A$ is the homogeneous coordinate ring of the hyperplane section of $C_A$ defined by $X_1=0$.
Both algebras $\res[C_A]$ and $\mathcal B_A$ are standard algebras, i.e. generated by their homogeneous pieces of degree one,  i.e. $\res[C_A]_1$ and
$(\mathcal B_A)_1$, respectively.
In general $\mathcal B_A$ is non Cohen-Macaulay as the classic example of Macaulay shows, see \exref{mac-ex}.
\end{remark}

\begin{example}
 \label{calculCA}
Let us consider the set $A=\{0,2,4,5,7\}$.
The associated monomial  curve $C_A$ is defined by the parameterization
$(u,v)\mapsto (u^7,u^5v^2,u^3v^4,u^2v^5,v^7)$.
Then the defining ideal of $C_A$ is minimally generated by
$x_2^2-x_1 x_3, x_2 x_4-x_1 x_5, x_3 x_4-x_2 x_5, x_2 x_3^2-x_1 x_4^2, x_3^3-x_1 x_4 x_5, x_4^3-x_3^2 x_5
$, \cite{DGPS}.
The Hilbert function of $C_A$ is $\HF_A=\{1,5,12,19, 26, 33, \cdots\}$ and the Hilbert polynomial $\HP_A(s)=7s-2$.
\end{example}

\medskip
\section{Sumsets and monomial projective curves}

We first recall some well known results on curves applied to the projective curve $C_A$, \cite{Har97}.
The monomial projective curve $C_A$ is rational with two eventually singular points
$P_1=(1,0,\cdots,0),  P_2=(0,\cdots,0,1)\in\mathbb P^{n-1}_{\res}$.
In the affine open neighborhood  $X_1=1$ of  $P_1$ the curve $C_A$  is defined by the parameterization
$v\mapsto (v^{a_2},\cdots, v^{a_n})$;  and in the open affine neighborhood $X_n=1$ of $P_2$ the curve
is defined by the parameterization  $u\mapsto (u^{a_n},  u^{a_n-a_2}, \cdots ,u^{a_n-a_{n-1}})$.
The point $P_1$ is non-singular iff $a_2=1$ and $P_2$ is non-singular iff $a_n-a_{n-1}=1$.

We denote by $p_a(C_A)$ the arithmetic genus of $C_A$, i.e.
$$
\HP_A(0)=1-p_a(C_A).
$$
Since $C_A$  is rational  its geometric genus is zero and
$$
p_a(C_A)=\sum_{P\in Sing(C_A)} \delta(C_A,P),
$$
where  $\delta(C_A,P)$ is the singularity order of $P\in Sing(C_A)$, i.e.
$$
\delta(C_A,P)=\dim_{\res}\frac{\overline{\mathcal O_{C_A,P}}}{\mathcal O_{C_A,P}}
$$
where the over-line stands for the integral closure of ${\mathcal O_{C_A,P}}$ in its field of fractions.
Summarizing, we get
$$
\HP_A(0)=1-\delta(C_A,P_1)-\delta(C_A,P_2).
$$
Since $C_A$ is a monomial curve in an affine neighbourhood of $P_1$ (resp. $P_2$) we have
$$
\delta(C_A,P_1)=Card(\mathbb N\setminus \langle a_2,\cdots, a_n \rangle)
$$
and
$$
\delta(C_A,P_2)=Card(\mathbb N\setminus \langle a_n-a_{n-1},\cdots, a_n-a_2,a_n \rangle).
$$

We know that the Hilbert polynomial $\HP_A(s)$ and the Hilbert function $\HF_A(s)$ agree for $s\gg 0$.
The first integer $s_0$ such that $\HF_A(s)=\HP_A(s)$ for all $s\ge s_0$ is called the regularity of the Hilbert function and it is denoted by $r(C_A)$.

The Castelnuovo-Mumford regularity $reg(C_A)$ of $C_A$, see \cite{Eis95}, for monomial projective curves is upper bounded in terms of the set $A$.
From \cite[Proposition 5.5]{Lvo96}, see also \cite{HHS10},
$$
reg(C_A)\le \rho(A):=1+\Max \{ (a_i-a_{i-1})+(a_j-a_{j-1})   ;2\le i<j\le n\}
$$
since $r(C_A)\le reg(C_A)$ we get that  $\HF_A(s)=\HP_A(s)$
for all $s\ge \rho(A)$.

Notice that $\rho_A\le a_n-n+3$.
This inequality can be deduced from the upper bound of the Castelnuovo-Mumford regularity conjectured
by Eisenbud and Goto and proved  by Gruson-Lazarsfeld-Peskine in the case of smooth curves, \cite{GLP83}.
If $C_A$ is non-singular then we have a better upper bound of the Castelnuovo-Mumford regularity, \cite[ Theorem 2.7]{HHS10},
$$
reg(C_A)\le 1+\Max \{ (a_i-a_{i-1})   ;2\le i<j\le n\}.
$$

The following result describes the asymptotic behaviour of $|sA|$, see \cite{GMV21}, \cite{Kho95}, \cite{NR02}.

\begin{proposition}
  \label{HilbertPol}
  Given a set $A=\{a_1=0, a_2,\cdots, a_n\}$ of integers such that $a_0< a_1< \cdots <a_n$ with $\gcd(a_2,\cdots, a_n)=1$ it holds
  $$
  |sA|=\HF_A(s)=s a_n +1- \delta(C_A,P_1)-\delta(C_A,P_2)
  $$
  for all  $s\ge \rho(A)$.
\end{proposition}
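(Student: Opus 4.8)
The plan is to read off the two coefficients of the linear polynomial $\HP_A$ and to invoke the regularity threshold already recorded above. First I would recall that, by \propref{bridge} together with the preceding identity $\HF_{R(A)}(s)=|sA|$, we have $\HF_A(s)=|sA|$ for every $s\ge 0$, and that the bound $r(C_A)\le reg(C_A)\le \rho(A)$ stated just before forces $\HF_A(s)=\HP_A(s)$ for all $s\ge \rho(A)$. Since $C_A$ is a one-dimensional projective variety, $\HP_A$ is a polynomial of degree one; by the standard description of the Hilbert polynomial of a projective curve (see \cite{Har97}) it has the shape
$$
\HP_A(s)=\deg(C_A)\,s+\big(1-p_a(C_A)\big),
$$
whose constant term is precisely the value $\HP_A(0)=1-p_a(C_A)$ isolated above. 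The genus decomposition $p_a(C_A)=\delta(C_A,P_1)+\delta(C_A,P_2)$ derived before the statement then fixes this constant term, so the only quantity left to determine is the leading coefficient $\deg(C_A)$.

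Second, I would show $\deg(C_A)=a_n$. The curve $C_A$ is the image of the morphism $\mathbb P^1_{\res}\to \mathbb P^{n-1}_{\res}$ sending $(u:v)$ to $(u^{a_n}:u^{a_n-a_2}v^{a_2}:\cdots:v^{a_n})$, given by $n$ forms of degree $a_n$. These forms have no common zero on $\mathbb P^1_{\res}$: at $u=0$ the coordinate $v^{a_n}$ survives and at $v=0$ the coordinate $u^{a_n}$ survives, so the morphism has no base points and the degree of a general hyperplane section equals $a_n$ divided by the degree of the map onto its image. That map is birational: writing $t=v/u$, each ratio $X_i/X_1$ equals $t^{a_i}$, so $\res(C_A)$ contains $t^{a_2},\dots,t^{a_n}$, and since $\gcd(a_2,\dots,a_n)=1$ one recovers $t$ itself as a Laurent monomial in these, whence $\res(C_A)=\res(t)=\res(\mathbb P^1_{\res})$. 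Therefore the map has degree one and $\deg(C_A)=a_n$. Combining the two steps, for all $s\ge \rho(A)$ we obtain
$$
|sA|=\HF_A(s)=\HP_A(s)=a_n\,s+1-\delta(C_A,P_1)-\delta(C_A,P_2).
$$

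I would expect the only real point requiring care to be this degree computation, namely the verification of birationality from $\gcd(a_2,\dots,a_n)=1$; everything else is a direct assembly of facts already established (the bridge $\HF_A=|sA|$, the regularity bound supplying the threshold $\rho(A)$, and the genus decomposition $p_a=\delta(C_A,P_1)+\delta(C_A,P_2)$). As an alternative that sidesteps the geometric degree argument, I could compute both coefficients combinatorially: for $s$ large, the elements of $sA$ near $0$ are exactly the small elements of the numerical semigroup $\langle a_2,\dots,a_n\rangle$ (padding with the zero $a_1=0$), contributing the $\delta(C_A,P_1)$ stable gaps, while the elements near the maximum $sa_n$ correspond, via $b\mapsto sa_n-b$, to gaps of $\langle a_n-a_{n-1},\dots,a_n-a_2,a_n\rangle$, contributing the $\delta(C_A,P_2)$ stable gaps; since $sA\subset\{0,1,\dots,sa_n\}$ this yields $|sA|=(sa_n+1)-\delta(C_A,P_1)-\delta(C_A,P_2)$ once $s\ge\rho(A)$ guarantees that the two ends no longer interact.
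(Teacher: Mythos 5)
Your proof takes essentially the same route as the paper: combine the bridge $|sA|=\HF_A(s)$, the regularity bound giving $\HF_A(s)=\HP_A(s)$ for $s\ge\rho(A)$, and the genus decomposition $\HP_A(0)=1-\delta(C_A,P_1)-\delta(C_A,P_2)$, with leading coefficient $\deg(C_A)=a_n$. The only difference is that you actually prove $\deg(C_A)=a_n$ (via the correct birationality argument from $\gcd(a_2,\dots,a_n)=1$), a fact the paper simply asserts with ``we know that $C_A$ is a degree $a_n$ projective curve,'' so your write-up is, if anything, more complete on that point.
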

\begin{proof}
We know that $C_A$ is a degree $a_n$ projective curve, so
$$
\HP_A(s)=s a_n+ \HP_A(0)= s a_n +1- \delta(C_A,P_1)-\delta(C_A,P_2).
$$
Since $\HF_A(s)=\HP_A(s)$ for all $s\ge \rho(A)$ and we know that $|sA|=\HF_A(s)$ for all $s\ge0$, we get the claim.
\end{proof}

\begin{corollary}
\label{multB}
$\mathcal B_A$ is a one dimensional standard graded algebra of multiplicity $a_n$.
\end{corollary}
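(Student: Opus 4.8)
The plan is to treat the two assertions separately. That $\mathcal B_A$ is a one-dimensional standard graded $\res$-algebra is essentially already recorded in \remref{BB}: it is standard because $\res[C_A]$ is, and it is one-dimensional because the coset of $X_1$ is a non-zero divisor. This last point is where a little care is warranted, and I would make it explicit: since $\res[C_A]\cong R(A)$ by \propref{bridge} and $R(A)$ is a subalgebra of the integral domain $\res[t,w]$, the ring $\res[C_A]$ is itself a domain, so every nonzero homogeneous element, and in particular $X_1$, is a non-zero divisor. Hence cutting by $X_1$ genuinely drops the Krull dimension from $2$ to $1$. The real content of the corollary is therefore the computation of the multiplicity.

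To compute it I would exploit that $X_1$ is a linear non-zero divisor. Multiplication by $X_1$ yields the short exact sequence of graded modules
$$
0\longrightarrow \res[C_A](-1)\xrightarrow{\;\cdot X_1\;} \res[C_A]\longrightarrow \mathcal B_A\longrightarrow 0,
$$
and comparing $\res$-dimensions in degree $s$ (with the convention $\HF_A(-1)=0$) gives $\HF_{\mathcal B_A}(s)=\HF_A(s)-\HF_A(s-1)$ for all $s\ge 0$.

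Next I would pass to large $s$, where $\HF_A(s)=\HP_A(s)$. From the proof of \propref{HilbertPol} we have $\HP_A(s)=a_n s+\HP_A(0)$, so taking the first difference collapses the linear term:
$$
\HF_{\mathcal B_A}(s)=\bigl(a_n s+\HP_A(0)\bigr)-\bigl(a_n(s-1)+\HP_A(0)\bigr)=a_n
$$
for all $s\gg 0$. Since $\mathcal B_A$ is one-dimensional, its Hilbert polynomial is the constant $a_n$, and for a one-dimensional standard graded algebra the multiplicity is exactly this eventual constant value of the Hilbert function; hence $e(\mathcal B_A)=a_n$.

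I do not expect a genuinely hard step here: the essential inputs — that $X_1$ is a non-zero divisor and that $C_A$ has degree $a_n$ (equivalently, that the leading coefficient of $\HP_A$ is $a_n$) — are already in hand from \remref{BB} and the proof of \propref{HilbertPol}. The only point that truly deserves attention is the verification, via \propref{bridge}, that $\res[C_A]$ is a domain, so that the hyperplane section by $X_1$ behaves as a reduction by a non-zero divisor and thereby preserves the multiplicity while lowering the dimension.
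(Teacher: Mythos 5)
Your proof is correct and follows essentially the same route as the paper: the paper's own (one-line) proof also combines the non-zero-divisor property of $X_1$ from \remref{BB} with the Hilbert polynomial computation of \propref{HilbertPol} to read off that the first difference of $\HF_A$ is eventually $a_n$. Your additional details — the domain argument via \propref{bridge} and the short exact sequence giving $\HF_{\mathcal B_A}(s)=\HF_A(s)-\HF_A(s-1)$ — are exactly the steps the paper leaves implicit.
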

\begin{proof}
Since $X_1$ is a non-zero divisor of $\res[C_A]$, \remref{BB}, we get the claim from the last proposition.
\end{proof}

The often called fundamental result of additive combinatorics claims:

\begin{proposition}\cite[Theorem 1.1]{Nat96}
\label{Funda}
Given a set $A=\{a_1=0, a_2,\cdots, a_n\}$ of integers such that $a_0< a_1< \cdots <a_n$ with $\gcd(a_2,\cdots, a_n)=1$, there exists a positive integer $\sigma$, non-negative integers $c_1, c_2$ and finite sets $C_1\subset [0,c_1-2]$ and $C_2\subset [0,c_2-2]$ such that
$$sA=C_1\sqcup [c_1,s a_n-c_2]\sqcup (\{s a_n\}-C_2)$$
 for all $s\ge \sigma$.
\end{proposition}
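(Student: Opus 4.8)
The plan is to read the two ends of $sA$ off the two (possibly singular) points of $C_A$ and to fill in the middle by a length estimate on the two numerical semigroups that already govern $\delta(C_A,P_1)$ and $\delta(C_A,P_2)$. Write $S_1=\langle a_2,\dots,a_n\rangle$ and $S_2=\langle a_n-a_{n-1},\dots,a_n-a_2,a_n\rangle$. Since $a_1=0$, an element of $sA$ is a sum of at most $s$ generators of $S_1$, so $sA\subseteq S_1$; dually, the reflection $x\mapsto sa_n-x$ carries $sA$ bijectively onto the $s$-fold sumset of $A^\ast=\{0,a_n-a_{n-1},\dots,a_n-a_2,a_n\}$, whose ambient semigroup is $S_2$, so $\{sa_n\}-sA\subseteq S_2$. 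Note that $a_n$ is the largest generator of \emph{both} $S_1$ and $S_2$ (it is $a_n-a_1$ in the second), which reflects $a_n=\deg C_A$. Let $c_1,c_2$ be the conductors of $S_1,S_2$ (the least integers with $[c_i,\infty)\subseteq S_i$); since $c_i-1\notin S_i$ I set $C_1=S_1\cap[0,c_1-1]\subseteq[0,c_1-2]$ and $C_2=S_2\cap[0,c_2-1]\subseteq[0,c_2-2]$, the candidate finite sets.

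For the two ends I would argue as follows. Each of the finitely many elements of $S_1$ below $c_1$ has a fixed representation as a sum of generators, hence lies in $sA$ once $s$ exceeds the total number of summands occurring among these representations; together with $sA\subseteq S_1$ this gives $sA\cap[0,c_1-1]=C_1$ for $s$ large. Applying the identical argument to $A^\ast$ and $S_2$ and reflecting yields $sA\cap[sa_n-c_2+1,\,sa_n]=\{sa_n\}-C_2$ for $s$ large. As a consistency check, the three blocks have total size $|C_1|+(sa_n-c_2-c_1+1)+|C_2|=sa_n+1-\mathrm{Card}(\mathbb N\setminus S_1)-\mathrm{Card}(\mathbb N\setminus S_2)$, which equals $\HF_A(s)$ by \propref{HilbertPol}; so once the middle block is shown to be full, the three pieces are automatically disjoint and exhaust $sA$ (for $s$ large enough that $c_1\le sa_n-c_2$).

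The crux is the middle block: I must show $[c_1,\,sa_n-c_2]\subseteq sA$ for all large $s$, i.e. that every integer there is a sum of at most $s$ generators. The tool is the Apéry set of $S_1$ with respect to $a_n$: writing $x=w_j+ma_n$, where $w_j$ is the least element of $S_1$ congruent to $x$ modulo $a_n$ and $m=(x-w_j)/a_n\le x/a_n$, the number of summands for $x$ is at most $D+m\le D+x/a_n$, with $D$ bounding the lengths of the finitely many Apéry elements. Hence $x\in sA$ whenever $x\in S_1$ and $x\le sa_n-Da_n$. The same estimate for $S_2$ gives a constant $D'$ with $x\in sA$ whenever $sa_n-x\in S_2$ and $sa_n-x\le sa_n-D'a_n$. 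Splitting the interval at $sa_n/2$, I use the first bound on the lower half (where $x\ge c_1$ forces $x\in S_1$) and the second on the upper half (where $x\le sa_n-c_2$ forces $sa_n-x\in S_2$); both halves are covered once $s\ge\max(2D,2D')$. I expect this uniform length control to be the main obstacle, since a naive greedy use of $a_n$ fails precisely when $x\bmod a_n$ is a gap of $S_1$, and it is the Apéry decomposition that repairs this. Taking $\sigma$ to be the maximum of the three thresholds then yields $sA=C_1\sqcup[c_1,\,sa_n-c_2]\sqcup(\{sa_n\}-C_2)$ for all $s\ge\sigma$.
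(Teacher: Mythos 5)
The paper gives no proof of this proposition: it is imported wholesale from \cite[Theorem 1.1]{Nat96}, so there is no internal argument to compare yours against. That said, your proof is correct and complete. The three ingredients all check out: the containments $sA\subseteq\Gamma_1$ and $\{sa_n\}-sA\subseteq\Gamma_2$ via the reflection $x\mapsto sa_n-x$; the stabilization of the two ends, since each of the finitely many semigroup elements below the conductor has a representation of bounded length that can be padded with copies of $a_1=0$; and, for the middle interval, the Ap\'ery-set bound, which gives the uniform estimate that any $x\in\Gamma_1$ is a sum of at most $D+x/a_n$ generators, so that $x\in sA$ once $x\le (s-D)a_n$, and symmetrically for $\Gamma_2$ --- splitting at $sa_n/2$ then covers $[c_1,sa_n-c_2]$ for $s\ge 2\Max\{D,D'\}$. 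This uniform length control is indeed the crux, and the Ap\'ery decomposition is the right repair of the naive greedy argument. Two remarks on what your route buys. First, the appeal to \propref{HilbertPol} is dispensable: since the intervals $[0,c_1-1]$, $[c_1,sa_n-c_2]$ and $[sa_n-c_2+1,sa_n]$ partition $[0,sa_n]\supseteq sA$ and you have computed $sA$ intersected with each of them, disjointness and exhaustion are automatic; dropping the cardinality count keeps the proof purely combinatorial and independent of the geometry of $C_A$. Second, your construction identifies $c_i$ as the conductor of $\Gamma_i$ and $C_i=\Gamma_i\cap[0,c_i-2]$, which is precisely the content of \propref{refinFun}; the paper derives that statement afterwards by combining the present proposition with \propref{HilbertPol}, whereas your argument yields it for free.
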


Notice that from the above identity of sets we deduce
$$
|sA|=a_n s + 1-(c_1-|C_1|+c_2-|C_2|)
$$
for $s\ge \sigma$.
From \propref{HilbertPol} we get that
$$
\delta(C_A,P_1)+\delta(C_A,P_2)=c_1-|C_1|+c_2-|C_2|.
$$

Let $\Gamma_1$ be the semigroup generated by $a_1,\cdots, a_n$ and let $\Gamma_2$ be
the semigroup generated by $a_n-a_{n-1},\cdots, a_n-a_2, a_n$.
Notice that $\Gamma_i$ is the semigroup of the  curve singularity germ  $(C_A, P_i)$, $i=1,2$.

Next, we determine the set $C_i$ and the integer $c_i$, $i=1,2$, in terms of the eventual singular points of the
projective curve $C_A$.
Notice that $C_i=\emptyset$ iff $P_i$ is a non-singular point of $C_A$, $i=1,2$.

\begin{proposition}
 \label{refinFun}
Following  the notations of \propref{Funda}, we have that, $i=1,2$,
$$
\delta(C_A,P_i)=c_i-|C_i|
$$
$c_i$ is the conductor of $\Gamma_i$ and
$C_i=\Gamma_i\cap [0,c_i-2]$.
\end{proposition}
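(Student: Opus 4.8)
The plan is to establish the case $i=1$ in full and then deduce $i=2$ by a reflection. Note that the involution $x\mapsto s a_n-x$ maps $sA$ bijectively onto $sA^{*}$, where $A^{*}=\{0,a_n-a_{n-1},\dots,a_n-a_2,a_n\}$ is the reversed set: from $x=a_{i_1}+\cdots+a_{i_s}$ one gets $s a_n-x=(a_n-a_{i_1})+\cdots+(a_n-a_{i_s})$. Since $C_{A^{*}}$ is $C_A$ with its coordinates reversed, we have $\Gamma_1(A^{*})=\Gamma_2$ and $\delta(C_{A^{*}},P_1)=\delta(C_A,P_2)$, while the fundamental decomposition of $sA$ reflects term by term to a decomposition of $sA^{*}$ of the same type, interchanging the pairs $(c_1,C_1)$ and $(c_2,C_2)$. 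As the constants will be pinned down intrinsically below, applying the $i=1$ statement to $A^{*}$ yields the $i=2$ statement for $A$, so it suffices to treat $i=1$.

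The heart of the argument is to identify the low end of $sA$ with the numerical semigroup $\Gamma_1$ truncated. First I would prove the elementary identity
$$
sA\cap[0,s a_2]=\Gamma_1\cap[0,s a_2]\qquad\text{for all } s\ge 1 .
$$
The inclusion $sA\subseteq\Gamma_1$ is immediate. Conversely, if $x\in\Gamma_1$ with $x\le s a_2$, write $x=\sum_{j\ge2}m_j a_j$; since every generator is at least $a_2$ one has $\sum_j m_j\le x/a_2\le s$, so $x$ is a sum of at most $s$ of the $a_j$, and padding with copies of $a_1=0$ shows $x\in sA$.

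Now fix $s\ge\sigma$ large enough that $s a_2$ exceeds $c_1$, $c_2$ and the conductor of $\Gamma_1$; this is possible because $c_1,c_2,C_1,C_2$ do not depend on $s$. On the range $[0,s a_2]$ the decomposition of \propref{Funda} reads $C_1\sqcup[c_1,s a_2]$, the reflected block $\{s a_n\}-C_2$ lying far above $s a_2$, so the identity above gives $\Gamma_1\cap[0,s a_2]=C_1\sqcup[c_1,s a_2]$. In particular $c_1-1\notin\Gamma_1$ while $[c_1,s a_2]\subseteq\Gamma_1$. Since $c_1-1$ is then a gap of $\Gamma_1$ it is at most the Frobenius number, whence $c_1\le\operatorname{cond}(\Gamma_1)$; and were the inequality strict, the Frobenius number $\operatorname{cond}(\Gamma_1)-1$ would be a gap lying in $[c_1,s a_2]\subseteq\Gamma_1$, a contradiction. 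Hence $c_1=\operatorname{cond}(\Gamma_1)$ and $C_1=\Gamma_1\cap[0,c_1-1]=\Gamma_1\cap[0,c_1-2]$. Finally every gap of $\Gamma_1$ lies in $[0,c_1-1]$, a range of $c_1$ integers of which exactly $|C_1|$ belong to $\Gamma_1$, so $\delta(C_A,P_1)=|\mathbb N\setminus\Gamma_1|=c_1-|C_1|$.

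The only delicate point is this matching step: \propref{Funda} supplies $c_1$ and $C_1$ merely existentially, so the proof must recover them intrinsically. The largest-gap (Frobenius) comparison is exactly what does this, exploiting the rigidity that $\Gamma_1$ is a genuine numerical semigroup whose unique largest gap forces $c_1$ to be its conductor. Once the constants are characterized this way for every set, the reflection of the first paragraph applies verbatim to $A^{*}$ and delivers the case $i=2$.
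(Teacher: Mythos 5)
Your proof is correct, and its overall strategy coincides with the paper's: identify the low end of $sA$, for $s$ large, with an initial segment of $\Gamma_1$, match this against the decomposition of \propref{Funda}, and read off that $c_1$ is the conductor and $C_1=\Gamma_1\cap[0,c_1-2]$. The implementation differs in three respects. First, the paper works on the \emph{fixed} window $[0,c_1+a_2]$: it shows $[c_1,c_1+a_2]\subset sA$ for large $s$, uses the monotonicity $sA\subset (s+1)A$ to get $sA\cap[0,c_1+a_2]=\Gamma_1\cap[0,c_1+a_2]$ for $s\gg 0$, and then concludes (implicitly) that an interval of $a_2+1$ consecutive elements of $\Gamma_1$, together with $a_2\in\Gamma_1$, forces $[c_1,\infty)\subset\Gamma_1$. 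You instead prove the identity $sA\cap[0,sa_2]=\Gamma_1\cap[0,sa_2]$ for \emph{every} $s\ge 1$ by the padding-with-zeros argument, on a growing window, and identify the conductor by a Frobenius-number comparison; this is an equally elementary route that needs no monotonicity of the sequence $(sA)_{s}$. Second, you spell out the reflection $A\mapsto A^{*}$ behind the paper's one-line reduction to $i=1$, including why $A^{*}$ is again normalized and why $\delta(C_{A^{*}},P_1)=\delta(C_A,P_2)$. Third --- and this is the most valuable addition --- you address the fact that the constants of \propref{Funda} are only given existentially, so the reflected decomposition of $sA^{*}$ is not a priori ``the'' decomposition for $A^{*}$; your intrinsic characterization of $c_1$ and $C_1$, valid for any constants satisfying such a decomposition, removes this ambiguity, a subtlety the paper passes over in silence. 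One bookkeeping remark: for the reflected block $\{sa_n\}-C_2$ to lie above $sa_2$ you need $s(a_n-a_2)$ large compared with $c_2$, not merely $sa_2>c_2$; this is harmless, since you may take $s$ as large as you wish (and the case $n=2$, where $a_2=a_n$, is trivial).
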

\begin{proof}
 We only have to prove the result for $i=1$.
Notice that if $s\ge \Max\{\sigma, (c_1+c_2)/a_n\}$ then
$$
[c_1,c_1+a_2]\subset sA.
$$
Moreover, since $sA\subset (s+1)A$, $s\ge 1$, we have  for all $s\gg 0$ that
$$
[c_1,c_1+a_2]\subset  sA \cap [0,c_1+a_2]=\Gamma_1 \cap [0,c_1+a_2].
$$
From this we get that $c_1$ is the conductor of $\Gamma_1$ and that
$$
C_1=\Gamma_1\cap [0,c_1-2].
$$
\end{proof}

\begin{example}
 \label{PolCD}
We consider the set $A=\{0,2,4,5,7\}$ of \exref{calculCA}.
The decomposition of $5A$ is
$$
5A=\{0,2\}\sqcup [4,33]\sqcup \{35\}
$$
so $c_1=4$, $C_1=\{0,2\}$, $c_2=2$ and $C_2=\{0\}$.
In this case we have $\Gamma_1=\{0,2,4,5,\cdots\}$, $\Gamma_2=\{0,2,3,\cdots\}$ and
$\delta_1=2$, $\delta_1=1$.
\end{example}

\medskip
\section{Rigid Hilbert polynomials and additive inverse problems}

In this section we link the inverse problems  with
the rigidity of  Hilbert polynomials and functions, \cite{Eli90}, \cite{EV91}.
In particular, we will recover several upper and lower bounds of the function $|sA|$ from some properties of the Hilbert function of $C_A$.

\begin{definition}
Let $H:\mathbb N\longrightarrow \mathbb N$ be a numerical function  asymptotically polynomial, i.e. there exists a polynomial $p(T)\in \mathbb Z[T]$ such that $H(s)=p(s)$ for $s\gg0$.
Let $\mathcal C$ be a class of graded $\res$-algebras.
We say that $p(T)$ is a rigid polynomial for the class $\mathcal C$ if for all graded $\res$ algebra $D$ of $\mathcal C$ if
$\HP_D=p$ then $\HF_D=H$, see \cite{Eli90}.
\end{definition}

From \lemref{1Reduction} (2) we get:

\begin{proposition}\cite[Theorems 1.3]{Nat96}
\label{lowerbound}
Given a set $A=\{a_1=0, a_2,\cdots, a_n\}$ of integers such that $a_0< a_1< \cdots <a_n$ with $\gcd(a_2,\cdots, a_n)=1$, for all $s\ge 0$ it holds
$$
 s(n-1)+1 \le |sA| \le \binom{s+n-1}{s}.
$$
\end{proposition}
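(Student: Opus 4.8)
The plan is to establish the two inequalities separately, both by elementary arguments anchored on the results already available.

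For the lower bound $s(n-1)+1\le |sA|$ I would proceed by induction on $s$, using the growth estimate of \lemref{1Reduction}~$(2)$. The base case is immediate: $|0A|=|\{0\}|=1=0\cdot(n-1)+1$. For the inductive step, \lemref{1Reduction}~$(2)$ gives $|(s+1)A|\ge |sA|+n-1$, and combining this with the inductive hypothesis $|sA|\ge s(n-1)+1$ yields $|(s+1)A|\ge (s+1)(n-1)+1$. Equivalently, one may simply telescope the inequalities $|(j+1)A|-|jA|\ge n-1$ for $0\le j\le s-1$, starting from $|0A|=1$, to obtain $|sA|\ge 1+s(n-1)$ in one step.

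For the upper bound $|sA|\le\binom{s+n-1}{s}$ I would invoke the bridge of the previous section. Recall that $|sA|=\HF_{R(A)}(s)$ and, by \propref{bridge}, that $R(A)\cong\res[C_A]=\res[X_1,\cdots,X_n]/\ker(\phi)$ as graded algebras. Since $\res[C_A]$ is a quotient of the polynomial ring $\res[X_1,\cdots,X_n]$, the dimension of its graded piece in degree $s$ is bounded above by that of the degree-$s$ piece of the polynomial ring itself, i.e. by the number of monomials of degree $s$ in $n$ variables, which is $\binom{s+n-1}{s}$. Hence $|sA|=\dim_{\res}\res[C_A]_s\le\binom{s+n-1}{s}$. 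A purely combinatorial variant is also available: by definition every element of $sA$ has the shape $a_{i_1}+\cdots+a_{i_s}$ with $1\le i_1\le\cdots\le i_s\le n$, so $sA$ is the image of the set of size-$s$ multisets of $\{1,\cdots,n\}$ under the summation map; as there are exactly $\binom{s+n-1}{s}$ such multisets and distinct multisets may collide to the same sum, $|sA|$ cannot exceed this count.

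I do not anticipate any genuine obstacle in this statement. The only points that require a little care are fixing the correct base case $|0A|=1$ so that the induction closes at the tight value $|1A|=n$, and recognizing $\binom{s+n-1}{s}$ as the Hilbert function of the ambient polynomial ring (equivalently, as the number of degree-$s$ multisets), after which both bounds drop out immediately.
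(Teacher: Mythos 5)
Your proposal is correct and follows essentially the same route as the paper: the paper also deduces the lower bound from \lemref{1Reduction}~(2) and the upper bound from \propref{bridge}, i.e.\ from the fact that $\res[C_A]\cong R(A)$ is a graded quotient of the polynomial ring $\res[X_1,\cdots,X_n]$. You merely spell out the induction and add an equivalent multiset-counting remark, both of which are fine elaborations of the paper's two-line argument.
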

\begin{proof}
  From \lemref{1Reduction} (2) we deduce the left hand inequality.
  The right hand inequality follows from \propref{bridge}.
  \end{proof}

In the next result we get \cite[Theorems 1.2, 1.6 and 1.8]{Nat96}; in particular  we prove that $p(T)=(n-1)T+1$ is a rigid polynomial for the class of $\res[C_A]$ algebras and that the condition
$|sA|= s(n-1)+1$, for some $s\ge 2$,  is a  rigid property, i.e. determines the whole Hilbert function, see \cite{EV91}.

\begin{theorem}\cite[Theorems 1.2, 1.6, 1.8]{Nat96},
\label{lowerbound2}
  Given a set $A=\{a_1=0, a_2,\cdots, a_n\}$ of integers such that $a_0< a_1< \cdots <a_n$ with $\gcd(a_2,\cdots, a_n)=1$,  the following conditions are equivalent:

\begin{enumerate}
  \item $|sA|=s(n-1)+1+ o(s)$
for infinitely many $s$, where $o(s)$ is an arithmetic function such that $\lim_{s\to\infty} o(s)=0$,
  \item $|sA|= s(n-1)+1$ for all $s\gg 0$,
  \item $|sA|= s(n-1)+1$ for some $s\ge 2$,
  \item $A=\{0,1,\cdots, n-1\}$,
  \item $|sA|= s(n-1)+1$ for all $s\ge 0$.
\end{enumerate}
\end{theorem}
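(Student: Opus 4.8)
The plan is to prove the cycle of implications $(4)\Rightarrow(5)\Rightarrow(2)\Rightarrow(1)\Rightarrow(4)$ together with $(5)\Rightarrow(3)\Rightarrow(4)$, so that all five statements collapse to the single geometric condition $a_n=n-1$. The implications $(4)\Rightarrow(5)$ (compute directly $sA=\{0,1,\dots,s(n-1)\}$), $(5)\Rightarrow(2)$, $(5)\Rightarrow(3)$ and $(2)\Rightarrow(1)$ (take $o(s)\equiv 0$) are immediate, so the real work is concentrated in $(1)\Rightarrow(4)$ and $(3)\Rightarrow(4)$.

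For $(1)\Rightarrow(4)$ I would exploit that both $\HF_A(s)=|sA|$ and $s(n-1)+1$ are integers, so the arithmetic function $o(s)=|sA|-s(n-1)-1$ is integer valued; since $o(s)\to 0$, it must vanish for every sufficiently large $s$ among the infinitely many for which the identity of $(1)$ holds. Hence $|sA|=s(n-1)+1$ for infinitely many arbitrarily large $s$. On the other hand, \propref{HilbertPol} gives $|sA|=a_ns+1-\delta(C_A,P_1)-\delta(C_A,P_2)$ for every $s\ge\rho(A)$. Two polynomials of degree one agreeing at infinitely many integers coincide, so $a_n=n-1$ (and $\delta(C_A,P_1)+\delta(C_A,P_2)=0$). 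Since $A\subseteq\{0,1,\dots,a_n\}$ has $n=a_n+1$ elements, this forces $A=\{0,1,\dots,n-1\}$, which is $(4)$.

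For $(3)\Rightarrow(4)$ I would first feed the single equality into the monotonicity estimate. Writing the telescoping sum $|s_0A|-|0A|=\sum_{t=0}^{s_0-1}\bigl(|(t+1)A|-|tA|\bigr)$ and comparing it with $s_0(n-1)$, the bound $|(t+1)A|\ge|tA|+(n-1)$ of \lemref{1Reduction}~$(2)$ forces every increment to equal $n-1$; in particular $|2A|=2n-1$, as $s_0\ge 2$. The heart of the matter is then the structural step $|2A|=2n-1\Rightarrow A=\{0,1,\dots,n-1\}$. Because $a_1=0$, the two translates $A$ and $a_n+A$ both sit inside $2A$, meet only in $\{a_n\}$, and together have cardinality $2n-1$; hence $2A=A\cup(a_n+A)$. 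Testing the $n$ elements $a_2+a_i$ against this description, and tracking whether each lands in $A$ or in $a_n+A$, one finds $a_{i+1}=a_i+a_2$ for every $i$, so $A$ is an arithmetic progression of common difference $a_2$. The normalization $\gcd(a_2,\dots,a_n)=a_2=1$ then yields $A=\{0,1,\dots,n-1\}$.

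I expect the structural step inside $(3)\Rightarrow(4)$ to be the main obstacle: the telescoping reduction to $|2A|=2n-1$ is painless, but deducing that a set with doubling of minimal size must be an arithmetic progression is the genuinely combinatorial kernel of the theorem. Everything else is either bookkeeping or a direct appeal to the degree-and-genus reading of the Hilbert polynomial supplied by \propref{HilbertPol}.
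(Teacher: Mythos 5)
Your proposal is correct, but it takes a genuinely different route from the paper on the one implication where all the work lies, namely $(3)\Rightarrow(4)$. The paper's proof is algebro-geometric: from $|sA|=s(n-1)+1$ it deduces the set identity $sA=(s-1)A\sqcup\{(s-1)a_n+a_2,\dots,(s-1)a_n+a_n\}$, reads this as saying that the hyperplane-section algebra $\mathcal B_A=\res[C_A]/X_1\res[C_A]$ satisfies $(\mathcal B_A)_s=X_n^{s-1}(\mathcal B_A)_1$, propagates this by standardness to $(\mathcal B_A)_{r(s-1)+1}=X_n^{(s-1)r}(\mathcal B_A)_1$ for all $r$, and then invokes \corref{multB} (multiplicity of $\mathcal B_A$ equals $a_n$) to force $a_n\le n-1$. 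Your argument instead stays entirely in additive combinatorics: telescoping with \lemref{1Reduction}~$(2)$ to reduce to $|2A|=2n-1$, then the classical Freiman--Nathanson small-doubling argument (the decomposition $2A=A\cup(a_n+A)$ with $A\cap(a_n+A)=\{a_n\}$, after which the $n-1$ increasing elements $a_2+a_1<\cdots<a_2+a_{n-1}$ must all lie in $A$, hence equal $\{a_2,\dots,a_n\}$, giving $a_{i+1}-a_i=a_2=1$). Both arguments are sound, and your handling of $(1)\Rightarrow(4)$ via integrality of $o(s)$ and \propref{HilbertPol} matches the paper's use of the Hilbert polynomial. The trade-off: your route is more elementary and self-contained, but it essentially re-proves Nathanson's Theorem 1.6 combinatorially, whereas the paper's purpose in this theorem is precisely to \emph{recover} that inverse result from the geometry of $C_A$ (rigidity of the Hilbert polynomial and the multiplicity of the hyperplane section); your proof is a valid proof of the statement, but it bypasses the bridge the paper is built to exhibit. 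One small caveat: the step ``tracking whether each $a_2+a_i$ lands in $A$ or in $a_n+A$'' deserves the explicit observation that if $a_2+a_i\in a_n+A$ with $i<n$ then necessarily $a_2+a_i=a_n\in A$, so that in every case $a_2+a_i\in A$; as written it is a sketch, though a completable one.
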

\begin{proof}
By \propref{HilbertPol} we get that $(1)$ implies $(2)$.
On the other hand, $(2)$ trivially implies $(3)$.

Assume $(3)$, i.e. $|sA|= s(n-1)+1$ for some $s\ge 2$.
Notice that
$$
(s-1)A \cup \{(s-1) a_n +a_2,\cdots , (s-1) a_n +a_n\}\subset sA
$$
and, since $(s-1) a_n$ is the maximum of $(s-1)A$, we have
$$
(s-1)A \cap \{(s-1) a_n +a_2,\cdots , (s-1) a_n +a_n\}=\emptyset.
$$

By \propref{lowerbound} we have $|(s-1)A|\ge (s-1)(n-1)+1$, so
\begin{equation}\label{sA}
(s-1)A \cup \{(s-1) a_n +a_2,\cdots , (s-1) a_n +a_n\}= sA.
\end{equation}

We know that $\res[C_A]_s$ has as $\res$-basis the monomials
$t^{\alpha}w^s, \alpha\in sA$ and $X_1\res[C_A]_{s-1}$ is generated by
$t^{\alpha+a_1}w^s, \alpha\in (s-1)A$.
By \eqref{sA} we have that $(s-1)A+a_1\subset (s-1)A$ so the $\res$-vector space
$$
(\mathcal B_A)_s=\frac{\res[C_A]_s}{X_1\res[C_A]_{s-1}}
$$
is generated by the cosets of
$$
t^{(s-1)a_n+a_i}w^s, \quad i=2,\dots,n.
$$
This fact  implies that
$$
X_n^{s-1} (\mathcal B_A)_1 =(\mathcal B_A)_s
$$
Since the algebra $\mathcal B_A$ is standard
we get, multiplying both sides by $ (\mathcal B_A)_{(r-1)(s-1)}$, that
$$
X_n^{(s-1)r} (\mathcal B_A)_1= (\mathcal B_A)_{r(s-1)+1}
$$
for all $r\ge 1$.
Since $\dim_{\res}((\mathcal B_A)_t)=n-1 $, for $t\gg 0$ we obtain, \propref{multB},
$$
n-1\ge \dim_{\res}(\mathcal B_A)_{r(s-1)+1}= a_n
$$
for $r\gg 0$. Hence $a_n\le n-1 $ and we get $(4)$.

  The remaining implications are  easy computations.
\end{proof}

\begin{remark}
  \label{CRN}
The  curve $C_A$ for $A=\{0,\cdots, n-1\}$ is the rational normal curve of $\mathbb P^{n-1}_{\res}$, i.e. the curve defined by
$(u,v)\mapsto (u^{n-1},u^{n-2}v,\cdots, uv^{n-2},v^{n-1})$.
\end{remark}

\begin{remark}
From \lemref{1Reduction} (1) we get for a general set $A$ that
$|sA|= s(n-1)+1$ for all $s\ge 0$ if and only if $A$ is a $n$-term arithmetic progression, i.e.
$A=q_0+ q_1 [0,\cdots, n-1]$ for $q_0\in \mathbb N$ and $q_1\in \mathbb N\setminus \{0\}$.
\end{remark}

Next we use a result on additive combinatorics in order to improve an upper bound  of
the Castelnuovo-Mumford regularity of rational projective curves.
We first recall  the following result of V.F. Lev:

\begin{proposition}\cite[Theorem 1]{Lev96}
\label{lev}
Given  $A=\{a_1=0, a_2,\cdots, a_n\}$ with $\gcd(a_2,\cdots, a_n)=1$, it holds:
$$
|sA|-|(s-1)A|\ge \min\{a_n, s(n-2)+1\}
$$
for all $s\ge 2$.
\end{proposition}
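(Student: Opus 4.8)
The plan is to read the left-hand side as a first difference of the Hilbert function of a hyperplane section and then to produce enough \emph{new} sums, splitting into the two ranges in which the two arguments of the minimum are active. First I would translate the statement into the language of $\mathcal B_A$. Since $a_1=0$ we have $(s-1)A\subseteq sA$, so $|sA|-|(s-1)A|$ counts the integers whose shortest representation as a sum of elements of $\{a_2,\dots,a_n\}$ has length exactly $s$. By \remref{BB} the coset of $X_1$ is a non-zero divisor of $\res[C_A]$, and the basis \eqref{basis} shows that $(\mathcal B_A)_s$ has as $\res$-basis the cosets of $t^\alpha w^s$ with $\alpha\in sA\setminus(s-1)A$. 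Hence
$$
|sA|-|(s-1)A|=\dim_\res(\mathcal B_A)_s=\HF_A(s)-\HF_A(s-1),
$$
and the inequality becomes $\HF_A(s)-\HF_A(s-1)\ge \min\{a_n,s(n-2)+1\}$ for $s\ge 2$.

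For the range in which $s(n-2)+1\ge a_n$ the claimed bound is $a_n$, the multiplicity of $\mathcal B_A$ (\corref{multB}). Here I would use the structure theorem \propref{Funda} together with \propref{refinFun}: once $s$ is large enough the central block $[c_1,sa_n-c_2]$ of $sA$ gains exactly $a_n$ integers each time $s$ increases, while the end pieces $C_1$ and $\{sa_n\}-C_2$ have stabilized, so $\HF_A(s)-\HF_A(s-1)=a_n$. The point to check is that this value is approached from below monotonically, i.e. that the difference never drops under $a_n$ while $s(n-2)+1$ still exceeds it; this should follow from the fact that the reflected set $sa_n-sA$ equals $s\widetilde A$ for $\widetilde A=\{0,a_n-a_{n-1},\dots,a_n-a_2\}$, so both tails are controlled by $\Gamma_1,\Gamma_2$ and contribute a non-decreasing number of new points.

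For the complementary range $s(n-2)+1\le a_n$ I would exhibit at least $s(n-2)+1$ integers of weight exactly $s$. Natural candidates are $sa_n$ together with the staircase $m_{i,\ell}=i\,a_{\ell+1}+(s-i)a_n$ for $1\le\ell\le n-2$ and $1\le i\le s$, all lying in $sA$. Since $(s-1)a_n=\Max\big((s-1)A\big)$, every element of $sA$ strictly above $(s-1)a_n$ is automatically new; one computes $m_{i,\ell}>(s-1)a_n$ exactly when $i\,(a_n-a_{\ell+1})<a_n$, so the top of each column of the staircase consists of genuinely new sums, accounting for the contributions governed by $\Gamma_2$. The remaining new sums, lying below $(s-1)a_n$, must instead be certified through irreducibility in $\Gamma_1=\langle a_2,\dots,a_n\rangle$. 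The main obstacle is precisely this bookkeeping: one must restrict the ranges of $i$ and $\ell$ so that the retained $m_{i,\ell}$ are pairwise distinct and of weight exactly $s$, and then combine the top ($\Gamma_2$) and bottom ($\Gamma_1$) contributions so that their total reaches $s(n-2)+1$. This is exactly where $\gcd(a_2,\dots,a_n)=1$ and the strict ordering $a_2<\cdots<a_n$ are needed, and it is the content of \cite[Theorem 1]{Lev96}; alternatively the proposition may simply be quoted from there and fed into \thmref{bermejo}.
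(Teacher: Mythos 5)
The paper does not actually prove this proposition: it is imported as a black box from Lev, as the citation \cite[Theorem 1]{Lev96} attached to the statement indicates, and its only role is to feed the regularity bound in \thmref{bermejo}. So your closing fallback --- that the proposition ``may simply be quoted from there'' --- is exactly the paper's route, and your opening translation $|sA|-|(s-1)A|=\dim_\res(\mathcal B_A)_s=\HF_{\mathcal B_A}(s)$ (via \remref{BB} and the basis \eqref{basis}) is correct and is precisely how the paper consumes the inequality.

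As a self-contained proof, however, your sketch has genuine gaps. In the range $s(n-2)+1\ge a_n$ you invoke \propref{Funda}, but that structure theorem only describes $sA$ for $s\ge\sigma$, and nothing bounds $\sigma$ by the crossover value $\lceil (a_n-1)/(n-2)\rceil$; to bridge the intermediate values of $s$ you claim the differences $|sA|-|(s-1)A|$ approach $a_n$ ``from below monotonically,'' and that is false. The paper's own Macaulay example (\exref{mac-ex}, $A=\{0,1,3,4\}$) has $\HF_A=\{1,4,9,13,17,\dots\}$, so the differences are $3,5,4,4,\dots$: they overshoot $a_n=4$ at $s=2$ and then drop back. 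This is exactly the failure of Cohen--Macaulayness of $\mathcal B_A$; monotonicity of $\HF_{\mathcal B_A}$ and the bound $\HF_{\mathcal B_A}(s)\le a_n$ are available only under the Cohen--Macaulay hypothesis, which \thmref{bermejo} assumes but \propref{lev} does not. (Also, your reflected set should be $\widetilde A=\{0,a_n-a_{n-1},\dots,a_n-a_2,a_n\}$; omitting $a_n$ breaks the identity $sa_n-sA=s\widetilde A$.) In the complementary range $s(n-2)+1\le a_n$, your staircase produces candidates, but counting only the elements above $(s-1)a_n$ yields far fewer than $s(n-2)+1$ new sums in general, and the distinctness and ``weight exactly $s$'' bookkeeping for the elements below $(s-1)a_n$ --- which you yourself flag as the main obstacle --- is the entire content of Lev's theorem. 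So the attempted derivation does not stand on its own; the correct disposition, and the paper's, is simply to cite \cite[Theorem 1]{Lev96}.
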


In the following result we improve \cite[Theorem 2.7]{BGG17}, see also \cite{Lam21}, where an upper bound of
the Castelnuovo-Mumford regularity is given for a monomial projective curve $C_A$  under the hypothesis that $A$ is an arithmetic sequence.
We know that
$$\HF_{\mathcal B_A}(s)=\HF_A(s)-\HF_A(s-1)=|sA|-|(s-1)A|$$ so last result shows that the Hilbert function of the one-dimensional graded algebra
$\mathcal B_A$ grows rapidly.
This is the key point in the proof of the following result where we assume that $\res[C_A]$ is Cohen-Macaulay.
See \cite{CN83} and \cite{HS19} for several criteria
implying the Cohen-Macaulayness of $\res[C_A]$.

\begin{theorem}
\label{bermejo}
Given  $A=\{a_1=0, a_2,\cdots, a_n\}$ with $\gcd(a_2,\cdots, a_n)=1$.
If the two-dimensional ring $\res[C_A]$ is Cohen-Macaulay then
$$
reg(\res[C_A]) \le \ceil[\bigg]{\frac{a_n-1}{n-2}}
$$
\end{theorem}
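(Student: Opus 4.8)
The plan is to reduce the Castelnuovo--Mumford regularity statement to a statement about the Hilbert function of the one-dimensional algebra $\mathcal B_A$, and then to exploit the rapid growth guaranteed by \propref{lev}. Since $\res[C_A]$ is assumed Cohen--Macaulay of dimension two and $X_1$ is a non-zero divisor (see \remref{BB}), passing to the quotient $\mathcal B_A=\res[C_A]/X_1\res[C_A]$ preserves the relevant regularity: for a Cohen--Macaulay standard graded algebra, $reg(\res[C_A])=reg(\mathcal B_A)$, and because $\mathcal B_A$ is one-dimensional its regularity coincides with $r(C_A)$, the first integer $s_0$ with $\HF_A(s)=\HP_A(s)$ for all $s\ge s_0$. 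So the first step is to record that it suffices to bound this stabilization index $r(C_A)$.

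Next I would reformulate stabilization in terms of the first difference. Write $h(s)=\HF_{\mathcal B_A}(s)=|sA|-|(s-1)A|$. By \corref{multB} the multiplicity of $\mathcal B_A$ is $a_n$, so $h(s)=a_n$ for all $s\ge r(C_A)$, while $h(s)<a_n$ for $s<r(C_A)$ (the difference function of a one-dimensional Cohen--Macaulay module is nondecreasing up to the point where it reaches the multiplicity and then stays constant). The key point is therefore to show that $h(s)$ must reach the value $a_n$ no later than $s=\ceil{(a_n-1)/(n-2)}$.

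This is where \propref{lev} enters. That result gives $h(s)\ge \min\{a_n,\,s(n-2)+1\}$ for all $s\ge 2$. As soon as $s(n-2)+1\ge a_n$, i.e. $s\ge (a_n-1)/(n-2)$, the minimum equals $a_n$, forcing $h(s)\ge a_n$ and hence $h(s)=a_n$. The smallest integer $s$ satisfying this is exactly $s=\ceil{(a_n-1)/(n-2)}$, so $h$ has already stabilized at the multiplicity by that index, giving $r(C_A)\le\ceil{(a_n-1)/(n-2)}$ and therefore the claimed bound on $reg(\res[C_A])$.

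The main obstacle I anticipate is the first step: justifying cleanly that under the Cohen--Macaulay hypothesis the regularity of $\res[C_A]$ equals the Hilbert-function stabilization index of $\mathcal B_A$, rather than merely being bounded by the Castelnuovo--Mumford regularity of $\mathcal B_A$. One must use that a non-zero divisor of degree one does not change regularity, and that for a one-dimensional Cohen--Macaulay standard graded algebra the Castelnuovo--Mumford regularity agrees with the index from which the Hilbert function becomes the Hilbert polynomial (equivalently, the difference function becomes constant equal to the multiplicity). The growth estimate of Lev is then the easy engine once this translation is in place; the subtlety is entirely in matching \emph{regularity} to the purely numerical stabilization that Lev's inequality controls.
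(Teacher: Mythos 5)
Your proposal follows essentially the same route as the paper's own proof: use the Cohen--Macaulay hypothesis to pass to the hyperplane section $\mathcal B_A$, bound $\HF_{\mathcal B_A}$ above by the multiplicity $a_n$ (the Matlis fact) and below by Lev's inequality (\propref{lev}), and conclude that $\HF_{\mathcal B_A}$ stabilizes at $a_n$ by $s=\lceil (a_n-1)/(n-2)\rceil$, which bounds the regularity. The only slip is an off-by-one in labels: $reg(\mathcal B_A)$ equals $r(\mathcal B_A)=r(C_A)+1$ rather than $r(C_A)$, but since your argument actually bounds the stabilization index of $\HF_{\mathcal B_A}$ itself (exactly as the paper does via $reg(\res[C_A])=r(C_A)+1=r(\mathcal B_A)$), the conclusion is unaffected.
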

\begin{proof}
We write $s_0=\ceil[]{\frac{a_n-1}{n-2}}$.
Since $\res[C_A]$ is Cohen-Macaulay we have
$r(C_A)+1=reg(\res[C_A])$ and that $\mathcal B_A$ is a one-dimensional Cohen-Macaulay ring.
Hence we have
$$
\HF_{\mathcal B_A}(s)\le a_n
$$
for all $s\ge 1$, \cite[Chapter XII]{Mat77}.
From this inequality and \propref{lev} we get
$$
s(n-2)+1\le \HF_{\mathcal B_A}(s)\le \min\left\{a_n, \binom{s+n-2}{s}\right\}
$$
for $s=1,\cdots, s_0-1$; and
$$
\HF_{\mathcal B_A}(s)=a_n
$$
for $s\ge s_0$, i.e. $r(\mathcal B_A)\le s_0$.
Since
$r(C_A)+1 =r(\mathcal B_A)$ we get the claim:
$$
reg(\res[C_A])=r(C_A)+1 =r(\mathcal B_A)\le s_0.
$$
\end{proof}

\begin{example}[Macaulay's example]
\label{mac-ex}
In this example we consider the example of a non-singular, non-Cohen-Macaulay monomial projective curve given by Macaulay, \cite{Mac16}.
In this case the set is $A=\{0,1,3,4\}$.
The monomial curve $C_A$ associated to $A$ is defined by the  parameterization
$(u,v)\mapsto (u^4,u^3v,uv^3,v^4)$.
A computation with   Singular  \cite{DGPS}
give us that $\HF_A=\{1,4,9,13,17,21,\cdots\}$ and
$\HP_A(s)=4 s +1$.
Since the points $P_1, P_2$ are non-singular points of $C_A$, we deduce last identity from \propref{HilbertPol} as well.
\end{example}

\begin{example}
 \label{CMcurve}
We consider a especial case of \cite[Case A]{Lam21}.
Let us consider the set $A=\{0,7,8,9,10\}$.
From \cite[Theorem 2.1]{Lam21} we know that $\res[C_A]$ is Cohen-Macaulay and that
$r(C_A)=5$ that agrees with the upper bound of the \thmref{bermejo}.
The defining ideal of $C_A$ is minimally generated by:
$x_3^2- x_2  x_4,  x_3  x_4- x_2  x_5, x_4^2- x_3  x_5,  x_2^4- x_1  x_3  x_5^2, x_2^3  x_3- x_1  x_4  x_5^2,
 x_2^3  x_4- x_1  x_5^3.$
A straight computation shows
$$\HF_A=\{|sA|, s=0,1,\cdots \}=\{1,5, 12, 22, 32, 42, 52, 62, 72, \cdots\}$
$ and the Hilbert polynomial of $C_A$ is $\HP_A=10s-8$.
\end{example}


\bibliographystyle{amsplain}

\end{document}